\documentclass[a4paper,11pt]{amsart}
\usepackage{amsmath,amssymb,amsfonts,amsthm,exscale,calc}
\usepackage[top=3.0cm, bottom=3.0cm, inner=3.0cm, outer=3.0cm, includefoot]{geometry}

\usepackage[latin1]{inputenc}
\usepackage[T1]{fontenc}
\usepackage{verbatim}

\usepackage{geometry}
\usepackage{amssymb}
\usepackage{amsmath}
\usepackage{graphicx}
\usepackage{amsthm}

\usepackage{color}

\usepackage{enumerate}

\setlength{\parindent}{0mm}
\setlength{\parskip}{2mm }

\newcommand{\D}{\Deg_{\max}}
\newcommand{\IR}{{\mathbb{R}}}

\newcommand{\IN}{{\mathbb{N}}}



\newcommand{\eChar}{\begin{enumerate}[(i)]}
\newcommand{\eCharR}{\begin{enumerate}[(a)]}
\newcommand{\eBr}{\begin{enumerate}[(1)]}


\newcommand{\Deg}{\operatorname{Deg}}

\newcommand{\diam}{\operatorname{diam}}
\newcommand{\eps}{\varepsilon}

\newcommand{\Abstract}

\date{\today}

\theoremstyle{plain}
\newtheorem{lemma}{Lemma}[section]
\newtheorem{theorem}[lemma]{Theorem}

\newtheorem{corollary}[lemma]{Corollary}

\theoremstyle{definition}

\newtheorem{rem}[lemma]{Remark}
\newtheorem{defn}[lemma]{Definition}

\numberwithin{equation}{section}

\begin{document}

\title
{
	Bakry-\'Emery curvature and diameter bounds on graphs
}

\author[Liu]{Shiping Liu}
\address{S. Liu, School of Mathematical Sciences, University of Science and Technology of China, Hefei 230026, Anhui Province, China}
\email{spliu@ustc.edu.cn}

\author[M\"unch]{Florentin M\"unch}
\address{F. M\"unch, Institut f\"ur Mathematik\\Universit{\"a}t Potsdam \\14476 Potsdam, Germany }\email{chmuench@uni-potsdam.de}

\author[Peyerimhoff]{Norbert Peyerimhoff}
\address{N. Peyerimhoff, Department of Mathematical Sciences, Durham University, Durham DH1 3LE, United Kingdom}
\email{norbert.peyerimhoff@durham.ac.uk}

\begin{abstract}
	We prove diameter bounds for graphs having a positive Ricci-curvature bound in the Bakry-\'Emery sense. Our first result using only curvature and maximal vertex degree is sharp in the case of hypercubes. The second result depends on an additional dimension bound, but is independent of the vertex degree. In particular, the second result is the first Bonnet-Myers type theorem for unbounded graph Laplacians. Moreover, our results improve diameter bounds from \cite{Fathi2015} and \cite{Horn2014} and solve a conjecture from \cite{Cushing2016}.
\end{abstract}
\maketitle
\section{Introduction}

The classical Bonnet-Myers theorem states that for a complete, connected Riemannian manifold with Ricci-curvature bounded from below by $K>0$, the diameter is bounded by the diameter of the sphere with the same dimension and Ricci-curvature $K$ (see \cite{Myers1941}).
Moreover by Cheng's Rigidy theorem (see \cite{Cheng1975}), sharpness is obtained if and only if the manifold is a sphere. Bakry and Ledoux \cite{BL1996} successfully established this theorem for abstract Markov generators which are diffusion and  satisfy Bakry-\'Emery-Ricci curvature \cite{BE1985} conditions.


Our aim is to give a simple proof of this theorem in a discrete setting.
Indeed, discrete space Markov generators are not diffusion and therefore, the theory of Bakry and Ledoux is not applicable.
However, for many discrete curvature notions there is already a Bonnet-Myers-type result established (for sectional curvature on planar graphs, see \cite{DeVos2007,Higuchi2001,Keller2014,Stome1976}, and for Ollivier-Ricci-curvature, see \cite{Ollivier2009}, and for Forman's discrete Ricci curvature, see \cite{Forman2003}).

In this article, we focus on Bakry-\'Emery-Ricci-curvature \cite{BE1985,Schmueckenschlaeger1999,LY2010,Klartag2015}. 
Under the assumption of finite measure and bounded vertex degree, a
discrete Bonnet-Myers theorem has been proven for
Bakry-\'Emery-curvature on discrete Markov-chains \cite{Fathi2015}.
Furthermore, a discrete Bonnet-Myers type theorem was
established under the $CDE'$-condition in
\cite{Horn2014}, whereby $CDE'$ is stronger than the $CD$ condition
\cite{Muench2015}.  We will prove diameter bounds under $CD$
conditions which give sharp results and improve diameter bounds from
\cite{Fathi2015} and \cite{Horn2014}.  Moreover, our results solve
Conjecture~8.1 from \cite{Cushing2016}.

In contrast to manifolds, we can upper bound the Laplacian by the gradient on graphs, that is, $(\Delta f)^2 \leq C \Gamma f$ for all functions $f$ on the vertices and a constant $C > 0$ depending only on the maximal vertex degree. This property will give us diameter bounds using the vertex degree instead of the dimension parameter in the curvature-dimension condition.

\subsection{Organization of the paper and main results}
In Subsection~1.2, we define Bakry-\'Emery-Ricci-curvature and discuss different distance and diameter notions.

In Section~2, we give two versions of diameter bounds.

The first result (Corollary~\ref{cor:Bonnet Myers CD(K,infty)}) is using $CD(K,\infty)$ and bounded vertex degree $\D$ and gives the sharp estimate
$$
\diam_d(G) \leq \frac {2\D}{K}.
$$
This solves Conjecture~8.1 from \cite{Cushing2016} claiming that for every graph satisfying $CD(K,\infty)$ with $K>0$, there should exist an upper diameter bound of the graph only depending on $K$ and $\D$.

The second result (Theorem~\ref{thm:Bonnet-Myers CD(K,n)}) works in a more general setting, in particular, no boundedness of the vertex degree is needed anymore. But instead, we will assume $CD(K,n)$ with finite $n$ to prove
$$
\diam_\rho  \leq \pi \sqrt{\frac {n}K}
$$
where $\rho$ is the resistance metric (see Definition~\ref{def:resistance metric}).

Finally in Subsection~2.3, we compare these diameter bounds to diameter bounds from \cite{Fathi2015} and \cite{Horn2014}. In comparison to \cite{Horn2014}, we need a weaker curvature assumption and obtain stronger estimates (see Remark~\ref{rem:Horn}). In comparison to \cite{Fathi2015}, we have an improvement by a factor of $2$ under the same curvature assumptions (see Remark~\ref{rem:fathi}).

\subsection{Setup and notations}

A triple $G=(V,w,m)$ is called a \emph{(weighted) graph} if $V$ is a countable set, if $w:V^2 \to [0,\infty)$ is symmetric and zero on the diagonal and if $m:V \to (0,\infty)$. We call $V$ the \emph{vertex set}, $w$ the \emph{edge weight} and $m$ the \emph{vertex measure}. We define the \emph{graph Laplacian} as a map $\IR^V \to \IR^V$ via
$\Delta f(x) := \frac 1 {m(x)} \sum_y w(x,y)(f(y) - f(x))$.
In the following, we only consider \emph{locally finite} graphs, i.e., for every $x \in V$ there are only finitely many $y \in V$ with $w(x,y) >0$.
We write $\Deg(x) := \frac{\sum_y w(x,y)}{m(x)}$ and $\D := \sup_x \Deg(x)$.

\begin{defn} [Bakry-\'Emery-curvature]
The \emph{Bakry-\'Emery-operators} are defined via
$$
2\Gamma(f,g) := \Delta (fg) - f\Delta g - g\Delta f
$$
and
$$
2\Gamma_2(f,g) := \Delta \Gamma(f,g) - \Gamma(f, \Delta g) - \Gamma(g,\Delta f).
$$
We write $\Gamma(f):= \Gamma(f,f)$ and $\Gamma_2(f):=\Gamma_2(f,f)$.

A graph $G$ is said to satisfy the \emph{curvature dimension inequality} $CD(K,n)$ for some $K\in \IR$ and $n\in (0,\infty]$ if for all $f$,
$$
\Gamma_2(f) \geq \frac 1 n (\Delta f)^2 + K \Gamma f.
$$
\end{defn}

Next, we define combinatorial and resistance metrics and diameters.

\begin{defn}[Combinatorial metric]\label{def:combinatorial metric}
	Let $G=(V,w,m)$ be a locally finite graph.
	We define the \emph{combinatorial metric} $d:V^2 \to [0,\infty)$ via
	$$
	d(x,y) := \min \{n : \mbox{ there exist } x=x_0,\ldots,x_n=y \mbox{ s.t. } w(x_i,x_{i-1})>0 \,\mbox{for all } i=1\ldots n\}
	$$
	and the \emph{combinatorial diameter} as
	$\diam_d(G) := \sup_{x,y \in V} d(x,y)$.
\end{defn}

\begin{defn}[Resistance metric]\label{def:resistance metric}
	Let $G=(V,w,m)$ be a locally finite graph.
	We define the \emph{resistance metric} $\rho:V^2 \to [0,\infty)$ via
	$$
	\rho(x,y) := \sup \{f(y) - f(x) : \left\| \Gamma f \right\|_\infty \leq 1\}
	$$
	and the \emph{resistance diameter} as
	$\diam_\rho(G) := \sup_{x,y \in V} \rho(x,y)$.
\end{defn}

In the case of bounded degree, there is a standard estimate between combinatorial and resistance metric.

\begin{lemma}(Combinatorial and resistance metric)\label{lem:d, rho}
	Let $G=(V,w,m)$ be a locally finite graph with $\D < \infty$. Then for all $x_0,y_0 \in V$,
	$$
	d(x_0,y_0) \leq \sqrt{\frac{\D}{2}}\rho(x_0,y_0).
	$$
\end{lemma}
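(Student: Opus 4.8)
The plan is to produce, for each pair $x_0,y_0$, a single competitor function in Definition~\ref{def:resistance metric} whose increment from $x_0$ to $y_0$ is a fixed multiple of the combinatorial distance; feeding this into the supremum defining $\rho$ then lower-bounds $\rho(x_0,y_0)$ and yields the claim after rearranging.

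First I would record the pointwise form of $\Gamma$. Expanding $2\Gamma f = \Delta(f^2) - 2f\Delta f$ and simplifying the two Laplacian terms gives
$$
\Gamma f(x) = \frac{1}{2m(x)}\sum_y w(x,y)\,(f(y)-f(x))^2,
$$
so that $\|\Gamma f\|_\infty\le 1$ is exactly a uniform bound on a weighted average of squared edge-increments of $f$.

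The natural competitor is the distance function $f(x):=d(x_0,x)$. If $w(x,y)>0$ then $x$ and $y$ are adjacent (and distinct, as $w$ vanishes on the diagonal), so $d(x,y)=1$ and the triangle inequality gives $|f(y)-f(x)|\le 1$ on every edge. Plugging this into the formula above and using $\sum_y w(x,y)=m(x)\Deg(x)$, I get
$$
\Gamma f(x)\le \frac{1}{2m(x)}\sum_y w(x,y)=\tfrac12\Deg(x)\le \frac{\D}{2}
$$
for all $x$, i.e.\ $\|\Gamma f\|_\infty\le \D/2$. Rescaling to $g:=\sqrt{2/\D}\,f$ makes $\Gamma g=(2/\D)\Gamma f$ satisfy $\|\Gamma g\|_\infty\le 1$, so $g$ is admissible and
$$
\rho(x_0,y_0)\ge g(y_0)-g(x_0)=\sqrt{\tfrac2\D}\,d(x_0,y_0),
$$
which is the asserted inequality after dividing by $\sqrt{2/\D}$.

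I do not expect any genuine obstacle here: the argument is a one-function test in the variational definition of $\rho$. The only points needing care are deriving the pointwise expression for $\Gamma$ correctly and tracking the normalisation so that the constant comes out to exactly $\sqrt{\D/2}$ and not off by a factor of two; the essential idea is simply that the $1$-Lipschitz distance function has $\Gamma f\le \D/2$, hence becomes a valid test function after scaling by $\sqrt{2/\D}$.
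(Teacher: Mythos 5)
Your proposal is correct and is essentially the paper's own proof: both test the variational definition of $\rho$ with the (suitably scaled) distance function $x\mapsto d(x_0,x)$, using that its edge-increments are at most $1$ so that $\Gamma$ of the scaled function is bounded by $1$. The only cosmetic difference is that the paper builds the factor $\sqrt{2/\D}$ into the test function from the start, while you rescale at the end.
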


\begin{proof}
	Let $f:V \to \IR$ be defined by $f(x) := d(x,x_0)\sqrt{\frac{2} {\D}}$. Then for all $x \in V$,
\begin{align*}
	0\leq \Gamma f(x) &= \frac 1 {2 m(x)} \sum_y w(x,y)(f(y) - f(x))^2 \\
                &\leq    \frac 1 {2 m(x)} \sum_y w(x,y) \frac 2{\D}\\
                &=\frac{\Deg(x)}{\D}\\
                &\leq 1.
\end{align*}
Hence,
\begin{align*}
\rho(x_0,y_0) \geq f(y_0) - f(x_0) = d(y_0,x_0)\sqrt{\frac{2} {\D}}.
\end{align*}
This directly implies the claim.
\end{proof}

\section{Bonnet-Myers via the Bakry-\'Emery curvature-dimension condition}

In the first subsection, we obtain sharp diameter bounds for $CD(K,\infty)$.
In the second subsection, we obtain diameter bounds for unbounded Laplacians for $CD(K,n)$. In the third subsection we show that our results improve the diameter bounds from \cite[Theorem~7.10]{Horn2014} and from \cite[Corollary~6.4]{Fathi2015}.

\subsection{Diameter bounds and $CD(K,\infty)$}
The key to prove diameter bounds from $CD(K,\infty)$ is the semigroup characterization of $CD(K,\infty)$ which is equivalent to
$$
\Gamma P_t f \leq e^{-2Kt} P_t \Gamma f.
$$
{Here, $P_t$ denotes the heat semigroup operator.}
For details, see e.g. \cite{Gong2015,Lin2015}.

\begin{theorem}[Distance bounds under $CD(K,\infty)$]\label{thm:distance bounds CD(K,infty)}
	Let $(V,w,m)$ be a connected graph satisfying $CD(K,\infty)$ and $\D<\infty$.
	Then for all $x_0,y_0 \in V$,
	$$
	\rho(x_0,y_0) \leq  \frac {\sqrt{2\Deg(x_0)} + \sqrt{2\Deg(y_0)}}K.
	$$
\end{theorem}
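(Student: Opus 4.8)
The plan is to exploit the semigroup characterization $\Gamma P_t f \leq e^{-2Kt} P_t \Gamma f$ stated just before the theorem, combined with a Cauchy-Schwarz estimate that bounds the Laplacian pointwise by the curvature. Since $\rho(x_0,y_0)$ is a supremum over all $f$ with $\|\Gamma f\|_\infty \leq 1$, it suffices to fix one such $f$ and bound $f(y_0)-f(x_0)$ by the right-hand side. The first ingredient I would record is the pointwise inequality
\[
(\Delta h(x))^2 \leq 2\Deg(x)\,\Gamma h(x), \qquad \text{valid for all } h \text{ and all } x.
\]
This follows by applying Cauchy-Schwarz to $\sum_y w(x,y)(h(y)-h(x)) = \sum_y \sqrt{w(x,y)}\cdot\sqrt{w(x,y)}(h(y)-h(x))$, whose two factors evaluate to $\sum_y w(x,y)=m(x)\Deg(x)$ and $\sum_y w(x,y)(h(y)-h(x))^2 = 2m(x)\Gamma h(x)$. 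This is precisely the step producing the vertex-degree dependence in the final estimate.

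Applying this with $h = P_t f$ and invoking the semigroup characterization together with $P_t \Gamma f \leq \|\Gamma f\|_\infty\, P_t 1 \leq \|\Gamma f\|_\infty \leq 1$ (the heat semigroup is order preserving and sub-Markovian), I obtain the decay estimate
\[
|\Delta P_t f(x)| \leq \sqrt{2\Deg(x)\,\Gamma P_t f(x)} \leq \sqrt{2\Deg(x)}\; e^{-Kt}.
\]
Since $\frac{d}{dt} P_t f = \Delta P_t f$, this controls the time derivative of the difference $g(t) := P_t f(y_0) - P_t f(x_0)$.

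The core computation is the fundamental theorem of calculus applied to $g$. One has $g(0) = f(y_0)-f(x_0)$ and $g'(t) = \Delta P_t f(y_0) - \Delta P_t f(x_0)$, whence
\[
|f(y_0)-f(x_0)| = |g(0) - g(\infty)| \leq \int_0^\infty |g'(t)|\,dt \leq \big(\sqrt{2\Deg(x_0)} + \sqrt{2\Deg(y_0)}\big)\int_0^\infty e^{-Kt}\,dt,
\]
which equals $\big(\sqrt{2\Deg(x_0)} + \sqrt{2\Deg(y_0)}\big)/K$. Taking the supremum over all admissible $f$ then yields the claimed bound on $\rho(x_0,y_0)$.

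The main obstacle is justifying $g(\infty) = 0$, that is, $\lim_{t\to\infty}\big(P_t f(y_0) - P_t f(x_0)\big) = 0$; for infinite $V$ the semigroup need not converge to a global constant, so this cannot be asserted naively. I would handle it via connectedness: the bound $\Gamma P_t f(x) \leq e^{-2Kt}$ forces each single-edge difference to decay, since for $w(x,y)>0$,
\[
(P_t f(y) - P_t f(x))^2 \leq \frac{2m(x)\,\Gamma P_t f(x)}{w(x,y)} \leq \frac{2m(x)}{w(x,y)}\,e^{-2Kt} \longrightarrow 0.
\]
Summing these estimates along a fixed finite path from $x_0$ to $y_0$, which exists by connectedness, gives $g(t)\to 0$. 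The remaining points — existence and differentiability of the heat semigroup and the legitimacy of the integral representation — are standard for locally finite graphs with $\D<\infty$, and I would cite the references already mentioned in the excerpt.
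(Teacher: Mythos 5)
Your proposal is correct and follows essentially the same route as the paper's proof: the semigroup characterization of $CD(K,\infty)$, the Cauchy--Schwarz bound $(\Delta h)^2 \leq 2\Deg\,\Gamma h$, integration of $|\partial_t P_t f| \leq \sqrt{2\Deg}\,e^{-Kt}$ over $[0,\infty)$, and connectedness to kill $P_t f(y_0) - P_t f(x_0)$ in the limit. Your path-summation argument for that last limit is in fact a welcome elaboration of a step the paper only asserts.
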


\begin{proof}
	By \cite[Theorem~3.1]{Lin2015} and $\D<\infty$, we have that $CD(K,\infty)$ is equivalent to
	\begin{equation} \label{eq:semigrpchar}
	\Gamma P_t f \leq  e^{-2Kt} P_t \Gamma f	
	\end{equation}
	for all bounded functions $f:V\to \IR$.
	Due to Cauchy-Schwarz, $(\Delta g(x))^2 \leq 2 {\Deg(x)} (\Gamma g)(x)$ for all $x\in V$ and all $g:V\to \IR$.
	We fix $x_0 , y_0 \in V$ and $\eps>0$.
	Then by definition of $\rho$, there is a function $f:V\to \IR$ s.t.
	$f(y_0)-f(x_0) > \rho(x_0,y_0) - \eps$ and $\Gamma f \leq 1$.
	W.l.o.g., we can assume that $f$ is bounded.
Putting everything together yields for all $x \in  V$,		
	\begin{align*}
	\left|\partial_t P_t f(x)  \right|^2
	= \left|\Delta P_t f(x)  \right|^2
	\leq 2 {\Deg(x)}  \Gamma P_t f(x)
	\leq 2 {\Deg(x)}  e^{-2Kt} P_t \Gamma f(x)
	\leq 2 \Deg(x) e^{-2Kt}.
	\end{align*}
	By taking the square root and integrating from $t=0$ to $\infty$,
	we obtain
	\begin{align*}
	|P_T f(x) - f(x)|
	\leq \int_0^{\infty} \left|\partial_t P_t f(x)  \right| dt
	\leq \int_0^{\infty} \sqrt{2\Deg(x)} e^{-Kt} dt
	= \frac {\sqrt{2\Deg(x)}} K{}
	\end{align*}
	for all $T>0$ and $x \in V$.
	Now, the triangle inequality yields
	\begin{align*}
	\rho(x_0,y_0) - \eps &\leq |f(x_0) - f(y_0)| \\
	&\leq \left|P_t f(x_0) - f(x_0) \right|
	+ \left|P_t f(x_0) - P_t f(y_0) \right|
	+ \left|P_t f(y_0) - f(y_0) \right| \\
	& \leq  \frac {\sqrt{2\Deg(x_0)} + \sqrt{2\Deg(y_0)}}K +  \left|P_t f(x_0) - P_t f(y_0) \right| \\
	&\stackrel{t \to \infty}{\longrightarrow} \frac {\sqrt{2\Deg(x_0)} + \sqrt{2\Deg(y_0)}}K
	\end{align*}
	where $\left|P_t f(x_0) - P_t f(y_0) \right| \rightarrow 0$, since the graph is connected and since $\Gamma P_t f \rightarrow 0$ as $t \to \infty$ because of \eqref{eq:semigrpchar}.
	Taking the limit $\eps \to 0$ finishes the proof.
\end{proof}
We now use the distance bound to obtain a bound on the combinatorial diameter.
\begin{corollary}[Diameter bounds under $CD(K,\infty)$]\label{cor:Bonnet Myers CD(K,infty)}
	Let $(V,w,m)$ be a connected graph satisfying $CD(K,\infty)$.
	Then,
	$$
	\diam_d(G) \leq \frac {2 \D}K.
	$$
\end{corollary}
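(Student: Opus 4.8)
The plan is to feed the distance bound of Theorem~\ref{thm:distance bounds CD(K,infty)} into the metric comparison of Lemma~\ref{lem:d, rho}. All of the analytic content (the semigroup characterization of $CD(K,\infty)$, the Cauchy--Schwarz estimate $(\Delta g)^2 \leq 2\Deg\,\Gamma g$, and the integration of $|\partial_t P_t f|$) already lives in the theorem, so this corollary is pure bookkeeping once those two ingredients are in hand.

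Before invoking them I would dispose of the degenerate case $\D = \infty$, in which the asserted bound $2\D/K$ equals $+\infty$ and the inequality holds vacuously. Hence I may assume $\D < \infty$, which is precisely the hypothesis needed to apply both Lemma~\ref{lem:d, rho} and Theorem~\ref{thm:distance bounds CD(K,infty)}.

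Now fix arbitrary $x_0,y_0 \in V$. First I would apply Lemma~\ref{lem:d, rho} to bound the combinatorial distance by the resistance distance, $d(x_0,y_0) \leq \sqrt{\D/2}\,\rho(x_0,y_0)$. Next I would apply Theorem~\ref{thm:distance bounds CD(K,infty)} and replace each vertex degree by its supremum, using $\Deg(x_0),\Deg(y_0)\leq \D$, to obtain $\rho(x_0,y_0) \leq \bigl(\sqrt{2\Deg(x_0)}+\sqrt{2\Deg(y_0)}\bigr)/K \leq 2\sqrt{2\D}/K$. Multiplying the two estimates and simplifying via $\sqrt{\D/2}\cdot 2\sqrt{2\D} = 2\D$ yields $d(x_0,y_0) \leq 2\D/K$. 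Since $x_0,y_0$ are arbitrary, taking the supremum over all pairs gives $\diam_d(G) \leq 2\D/K$.

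There is essentially no hard part, as the substantive work sits in the already-proven theorem. The only points to watch are the uniform replacement $\Deg(\cdot)\leq \D$, which is legitimate precisely because we reduced to the case $\D<\infty$, and the clean cancellation in the final algebraic step.
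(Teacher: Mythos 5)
Your proof is correct and follows exactly the same route as the paper: combine Lemma~\ref{lem:d, rho} with Theorem~\ref{thm:distance bounds CD(K,infty)}, replace $\Deg(x_0),\Deg(y_0)$ by $\D$, and simplify $\sqrt{\D/2}\cdot 2\sqrt{2\D}/K = 2\D/K$. Your explicit dismissal of the $\D=\infty$ case is a small tidy addition the paper leaves implicit, but it does not change the argument.
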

\begin{proof}
	Due to Lemma~\ref{lem:d, rho} and the above theorem, we have for all $x_0, y_0$,
	\begin{align*}
	d(x_0,y_0) \leq \sqrt{\frac{\D}{2}} \rho(x_0,y_0)
	\leq \sqrt{\frac{\D}{2}} \frac {\sqrt{2\Deg(x_0)} + \sqrt{2\Deg(y_0)}}K \leq \frac{2\D}{K}.
	\end{align*}
	Thus, $
	\diam_d(G) \leq \frac {2 \D}K$ as claimed.
\end{proof}

Indeed, this diameter bound is sharp for the $n$-dimensional hypercube which has diameter $n$, curvature bound $K=2$ and vertex degree $\D =n$.

An interesting question is whether an analog of Cheng's rigidy theorem (see \cite{Cheng1975}) holds true. In particular, we ask whether hypercubes are the only graphs for which the above diameter bound is sharp.

\subsection{Diameter bounds and $CD(K,n)$}

We can also give diameter bounds for unbounded Laplacians. We need two ingredients to do so.
First, we have to replace the combinatorial metric by a resistance metric.
Second, we have to assume a finite dimension bound.
Furthermore, we will need completeness of the graph and non-degenerate vertex measure to obtain the semigroup characterization of $CD(K,n)$ (see \cite[Theorem~3.3]{Gong2015}, \cite[Theorem~1.1]{Hua2015}). For definitions of completeness of graphs and non-degenerate vertex measure, see Sections 1 and 2.1 of \cite{Hua2015} or \cite[Definition~2.9, Definition~2.13]{Gong2015}.

We start with an easy but useful consequence of Gong and Lin's semigroup characterization of $CD(K,\infty)$, see \cite[Theorem~3.3]{Gong2015}.

\begin{lemma}[Semigroup property of $CD(K,n)$]\label{lem:semigroup characterization CD(K,n)}
   Let $G=(V,w,m)$ be a complete graph with non-degenerate vertex measure.
   Suppose $G$ satisfies $CD(K,n)$. Then for all bounded $f:V \to \IR$ with bounded $\Gamma f$,
\begin{align}\label{eqn:CD(K,n) semigroup characterization}
   \Gamma P_t f \leq e^{-2Kt} P_t \Gamma f - \frac{1-e^{-2Kt}}{Kn}(\Delta P_t f)^2.
\end{align}
\end{lemma}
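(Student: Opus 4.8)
The plan is to run the standard Bakry--\'Emery interpolation (heat-flow) argument, now feeding in the full $CD(K,n)$ inequality rather than only its $K$-part. Fix $t>0$ and, for $s\in[0,t]$, set $g_s:=P_{t-s}f$ and consider the interpolating function
$$
\phi(s):=P_s\,\Gamma(g_s)=P_s\,\Gamma(P_{t-s}f).
$$
Note that $\phi$ is well-defined and finite: since $\Gamma f$ is bounded, the $CD(K,\infty)$-type semigroup bound controls $\Gamma(P_{t-s}f)$, so $P_s$ may be applied. Its endpoint values are exactly the two quantities we wish to compare, namely $\phi(0)=\Gamma P_tf$ and $\phi(t)=P_t\Gamma f$.

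First I would differentiate $\phi$. Using the commutation relations $\partial_s P_s=\Delta P_s=P_s\Delta$ and $\partial_s g_s=-\Delta g_s$ together with the product rule for $\Gamma$, one obtains
$$
\phi'(s)=P_s\bigl(\Delta\Gamma(g_s)-2\Gamma(g_s,\Delta g_s)\bigr)=2\,P_s\,\Gamma_2(g_s),
$$
where the last equality is just the definition of $\Gamma_2$. This differentiation formula is precisely the analytic heart of Gong and Lin's semigroup characterization \cite[Theorem~3.3]{Gong2015}; it is the one step that genuinely requires completeness of $G$ and non-degeneracy of $m$, since the Laplacian is allowed to be unbounded and one must justify term-by-term differentiation, integrability, and the commutation of $\Delta$ with $P_s$. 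I would import this step from the cited reference rather than redo it.

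Next I apply $CD(K,n)$ pointwise to $g_s$ and use that $P_s$ is order preserving:
$$
\phi'(s)=2\,P_s\,\Gamma_2(g_s)\ \ge\ 2K\,P_s\,\Gamma(g_s)+\frac{2}{n}\,P_s\bigl((\Delta g_s)^2\bigr)=2K\,\phi(s)+\frac{2}{n}\,P_s\bigl((\Delta g_s)^2\bigr).
$$
To control the last term I would use that completeness makes $P_s$ a conservative (stochastically complete) Markov operator, so Jensen's inequality gives $P_s(h^2)\ge (P_s h)^2$; combined with $P_s\Delta g_s=\Delta P_sP_{t-s}f=\Delta P_tf$ this yields $P_s\bigl((\Delta g_s)^2\bigr)\ge(\Delta P_tf)^2$, a quantity independent of $s$. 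Writing $c:=(\Delta P_tf)^2$, the differential inequality becomes $\phi'(s)\ge 2K\phi(s)+\tfrac{2}{n}c$.

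Finally I would integrate. Multiplying by the integrating factor $e^{-2Ks}$ turns the inequality into $\frac{d}{ds}\bigl(e^{-2Ks}\phi(s)\bigr)\ge \frac{2c}{n}e^{-2Ks}$, and integrating over $[0,t]$ gives
$$
e^{-2Kt}\phi(t)-\phi(0)\ \ge\ \frac{2c}{n}\int_0^t e^{-2Ks}\,ds=\frac{c\,(1-e^{-2Kt})}{Kn}.
$$
Rearranging and substituting $\phi(0)=\Gamma P_tf$, $\phi(t)=P_t\Gamma f$, and $c=(\Delta P_tf)^2$ produces exactly the claimed inequality. The main obstacle is entirely in the differentiation identity $\phi'=2P_s\Gamma_2(g_s)$ and in the conservativeness needed for Jensen; once these are granted (which is what completeness and non-degeneracy buy us, via \cite{Gong2015}), the remainder is an elementary first-order linear ODE comparison, which is why the lemma is ``easy but useful.''
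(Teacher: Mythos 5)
Your core argument is essentially the paper's: the paper cites Gong and Lin's integrated inequality $\Gamma P_t f \leq e^{-2Kt} P_t \Gamma f - \frac{2}{n}\int_0^t e^{-2Ks} P_s (\Delta P_{t-s} f)^2\, ds$ directly and then applies exactly your Jensen step, $P_s\bigl((\Delta P_{t-s}f)^2\bigr) \geq (P_s \Delta P_{t-s} f)^2 = (\Delta P_t f)^2$, to evaluate the integral as $\frac{1-e^{-2Kt}}{Kn}(\Delta P_t f)^2$. You instead unpack the Gong--Lin inequality into its differential form $\phi'(s) \geq 2K\phi(s) + \frac{2}{n}P_s((\Delta g_s)^2)$ and re-integrate with an integrating factor; this is an equivalent route and the computation checks out. (Minor remark: you do not actually need conservativeness for the Jensen step, since Cauchy--Schwarz gives $(P_s g)^2 \leq P_s(g^2)\,P_s(1) \leq P_s(g^2)$ already for sub-Markovian $P_s$.)

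There is, however, one genuine gap: the function class. The lemma is asserted for all bounded $f$ with bounded $\Gamma f$, but the semigroup characterization you import (whether in differential or integrated form) is available off the shelf only for compactly supported $f$; for an unbounded Laplacian the term-by-term differentiation of $s \mapsto P_s\Gamma(P_{t-s}f)$ is not justified for general bounded $f$. The paper spends the entire second half of its proof on exactly this extension: completeness supplies cutoff functions $\eta_k \to 1$ with $\Gamma \eta_k \leq 1$, the inequality is applied to the compactly supported functions $\eta_k f$, and one then passes to the limit, the only delicate term being $P_t\Gamma(\eta_k f) \to P_t \Gamma f$, which is handled via the uniform bound $\Gamma(\eta_k f) \leq 2\|\eta_k\|_\infty^2\,\Gamma f + 2\|f\|_\infty^2\,\Gamma\eta_k$ and pointwise convergence. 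Your proposal waves at completeness as what ``buys'' the differentiation identity, but completeness enters the paper's proof through this approximation argument, not by directly legitimizing the heat-flow computation for arbitrary bounded $f$; without some version of this limiting step the proof does not cover the stated hypothesis.
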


\begin{proof} We first assume that $f$ is compactly supported.
	By \cite[Theorem~3.3]{Gong2015}, we have
	\begin{align}\label{eq:Char CD(K,n) Gong Lin}
	\Gamma P_t f \leq e^{-2Kt} P_t \Gamma f - \frac 2 n \int_0^t e^{-2Ks} P_s (\Delta P_{t-s} f)^2 ds.
	\end{align}
	Jensen's inequality yields $P_s g^2 \geq (P_s g)^2$ for all $g$ and thus by $g:=\Delta P_{t-s} f$,
\begin{align}\label{eq:Jensen semigroup}
\frac 2 n \int_0^t e^{-2Ks} P_s (\Delta P_{t-s} f)^2  ds
\geq  \frac 2 n \int_0^t e^{-2Ks} (P_s \Delta P_{t-s} f)^2 ds
=\frac{1-e^{-2Kt}}{Kn}(\Delta P_t f)^2.
\end{align}
	Putting (\ref{eq:Char CD(K,n) Gong Lin}) and (\ref{eq:Jensen semigroup}) together yields the claim for compactly supported $f$.
	We now prove the claim for all bounded $f$ with bounded $\Gamma(f)$ by completeness and a density argument.
    Completeness implies that there are compactly supported $\left(\eta_k\right)_{k \in \IN}$ s.t.
    $\eta_k \to 1$ from below and $\Gamma \eta_k \leq 1$.
    Due to compact support, (\ref{eqn:CD(K,n) semigroup characterization}) holds for $\eta_k f$. Obviously since $\eta_k \to 1$ from below, we have
    $\Gamma P_t(\eta_k f) \to \Gamma P_t f$ and $ \Delta P_t(\eta_k f) \to \Delta P_t f$, pointwise for $k \to \infty$.
    It remains to show
    $$
    P_t \Gamma (\eta_k f) \to P_t \Gamma f,
    $$
    pointwise for $k \to \infty$.
    We observe
\begin{align*}
    \left[ (\eta_k f)(y) - (\eta_k f)(x)\right]^2 &= \left[\eta_k(y)(f(y)-f(x)) + f(x)(\eta_k(y) - \eta_k(x)) \right]^2 \\
    &\leq 2\left[\eta_k(y)(f(y)-f(x)) \right]^2 + 2 \left[ f(x)(\eta_k(y) - \eta_k(x)) \right]^2\\
    &\leq 2\left\| \eta_k \right\|_\infty^2 (f(y)-f(x))^2 + 2\|f\|_\infty^2 (\eta_k(y) - \eta_k(x))^2
\end{align*}
    and thus,
    $$
    \Gamma(\eta_k f) \leq 2\left\| \eta_k \right\|_\infty^2 \Gamma f + 2\|f\|_\infty^2 \Gamma \eta_k.
    $$
    This implies that $\Gamma(\eta_k f)$ is uniformly bounded in $k$ and since $\eta_k f \to f$ pointwise, we obtain
    $P_t \Gamma (\eta_k f) \to P_t \Gamma f$ as desired.
\end{proof}

With this semigroup property in hands, we now can prove diameter bounds.
We will use similar methods as in the proof of Theorem~\ref{thm:distance bounds CD(K,infty)}.

\begin{theorem}[Diameter bounds under $CD(K,n)$]\label{thm:Bonnet-Myers CD(K,n)}
	Let $G=(V,w,m)$ be a connected, complete graph with non-degenerate vertex measure.
	Suppose $G$ satisfies $CD(K,n)$ for some $K>0$ and $n<\infty$. Then,
	$$
	\diam_\rho(G) \leq \pi \sqrt{\frac n K}.
	$$
\end{theorem}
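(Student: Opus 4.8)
The plan is to mimic the proof of Theorem~\ref{thm:distance bounds CD(K,infty)}, but to exploit the extra dimension term in Lemma~\ref{lem:semigroup characterization CD(K,n)} in order to control $(\Delta P_t f)^2$ \emph{directly}, without ever invoking the vertex degree. Fix $x_0,y_0 \in V$ and $\eps>0$, and choose, by definition of $\rho$, a function $f$ with $\Gamma f \leq 1$ and $f(y_0)-f(x_0) > \rho(x_0,y_0)-\eps$; as before we may assume $f$ is bounded, and since $\Gamma f \leq 1$ it has bounded $\Gamma f$, so Lemma~\ref{lem:semigroup characterization CD(K,n)} applies.

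The key step is then to use the Markov property $P_t 1 = 1$ together with positivity of $P_t$ to get $P_t \Gamma f \leq 1$. Substituting this into \eqref{eqn:CD(K,n) semigroup characterization} and discarding the nonnegative left-hand side $\Gamma P_t f \geq 0$ yields the pointwise bound
$$
(\Delta P_t f(x))^2 \leq \frac{Kn\,e^{-2Kt}}{1-e^{-2Kt}}
$$
for every $x \in V$ and $t>0$. The crucial feature is that this estimate is entirely free of the degree: the dimension parameter $n$ now plays the role that $\Deg(x)$ played in the $CD(K,\infty)$ argument.

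Next I integrate $\partial_t P_t f = \Delta P_t f$ in $t$. Taking square roots and integrating from $0$ to $\infty$ (the integrand behaves like $(2Kt)^{-1/2}$ near $t=0$ and is therefore integrable), the substitution $u = e^{-Kt}$ reduces the integral to $\sqrt{Kn}\,K^{-1}\int_0^1 (1-u^2)^{-1/2}\,du = \sqrt{Kn}\,K^{-1}\cdot\frac{\pi}{2} = \frac{\pi}{2}\sqrt{n/K}$. This gives
$$
|P_T f(x) - f(x)| \leq \int_0^\infty |\Delta P_t f(x)|\,dt \leq \frac{\pi}{2}\sqrt{\frac n K}
$$
uniformly in $T>0$ and $x \in V$. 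Finally, exactly as in Theorem~\ref{thm:distance bounds CD(K,infty)}, the triangle inequality
$$
\rho(x_0,y_0)-\eps \leq |P_t f(x_0)-f(x_0)| + |P_t f(x_0) - P_t f(y_0)| + |P_t f(y_0) - f(y_0)|
$$
combined with $|P_t f(x_0) - P_t f(y_0)| \to 0$ as $t\to\infty$ (which follows from connectedness and $\Gamma P_t f \leq e^{-2Kt}\to 0$) yields $\rho(x_0,y_0) - \eps \leq \pi\sqrt{n/K}$. Letting $\eps \to 0$ and taking the supremum over $x_0,y_0$ gives the claim.

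I expect the main content to be the observation in the second paragraph, rather than the computation: nonnegativity of $\Gamma P_t f$ is what converts the dimension term into a degree-free bound on $(\Delta P_t f)^2$ of exactly the form $e^{-2Kt}/(1-e^{-2Kt})$, whose square root integrates to the arcsine integral $\pi/2$ and thereby produces the sharp constant $\pi\sqrt{n/K}$. The technical points requiring care are the justification of $P_t\Gamma f \leq 1$ and of the convergence $P_t f(x_0)-P_tf(y_0)\to 0$, both of which rely on the completeness and non-degeneracy hypotheses feeding into Lemma~\ref{lem:semigroup characterization CD(K,n)}.
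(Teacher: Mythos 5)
Your proposal is correct and follows essentially the same route as the paper: discard the nonnegative $\Gamma P_t f$ in the $CD(K,n)$ semigroup inequality, bound $P_t\Gamma f\leq 1$, integrate $|\Delta P_t f|\leq \sqrt{Kn}\,(e^{2Kt}-1)^{-1/2}$ to get $\frac{\pi}{2}\sqrt{n/K}$, and conclude via the triangle inequality. The only (immaterial) differences are that the paper argues by contradiction rather than via an $\eps$-approximation, and evaluates the integral with an arctangent antiderivative rather than your arcsine substitution.
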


\begin{proof}
	Suppose the opposite. Then, there are $x,y \in V$ s.t. $\rho(x,y)> \pi \sqrt{\frac n K}$, and there is a function $f:V \to \IR$ s.t. $\Gamma f \leq 1$ and $f(y) - f(x) >  \pi \sqrt{\frac n K}$. W.l.o.g., we can assume that $f$ is bounded.
	By the semigroup property of $CD(K,n)$ from Lemma~\ref{lem:semigroup characterization CD(K,n)} and by ignoring the non-negative term $\Gamma P_t f$, we have
	\begin{align*}
	\frac{1-e^{-2Kt}}{Kn}\left( \Delta P_t f \right)^2 \leq e^{-2Kt} P_t \Gamma f.
	\end{align*}
    Taking square root and applying $\Gamma f \leq 1$ yields
    $$
    |\partial_t P_t f| = |\Delta P_t f| \leq \sqrt{Kn} \sqrt{\frac{e^{-2Kt}}{1-e^{-2Kt}}} = \sqrt{Kn} \sqrt{\frac{1}{e^{2Kt}-1}}.
    $$
	Integrating from $t=0$ to $\infty$ yields for all $T$,
    \begin{align*}
       |P_T f - f|
       \leq \sqrt {Kn} \int_{0}^{\infty} \sqrt{\frac{1}{e^{2Kt}-1}} dt
       =  \sqrt {Kn} \frac{ \arctan{\sqrt{ e^{2Kt} - 1  }}}{K} \bigg|_{t=0}^{\infty}  = \frac {\pi}{2} \sqrt{\frac{n}{K}}
    \end{align*}	

    Due to $CD(K,\infty)$ which implies $\| \Gamma P_t f \|_\infty \stackrel{t\to \infty}{\longrightarrow} 0$ and since $G$ is connected, we infer
    $\left|P_t f(x) - P_t f(y) \right| \stackrel{t\to \infty}{\longrightarrow} 0$.
    We now apply the triangle inequality and obtain
    \begin{align*}
    \pi \sqrt{\frac n K}
    &< |f(y) - f(x)| \\
    &\leq |P_t f(y) - f(y)|  + |P_t f(y) - P_t f(x)|  +|P_t f(x) - f(x)| \\
    &\leq \pi \sqrt{\frac n K} +  |P_t f(y) - P_t f(x)| \\
    &\stackrel{t\to \infty}{\longrightarrow}  \pi \sqrt{\frac n K}.
    \end{align*}
    This is a contradiction and thus, $\diam_\rho(G) \leq \pi \sqrt{\frac n K}$ as claimed.
\end{proof}

In contrast to Corollary~\ref{cor:Bonnet Myers CD(K,infty)}, we cannot have sharpness in Theorem~\ref{thm:Bonnet-Myers CD(K,n)} since in the proof, we have thrown away $\Gamma P_t f$ which is strictly positive for $t>0$.

\subsection{Comparison with other discrete diameter bounds}

In \cite[Theorem 7.10]{Horn2014}, Horn, Lin, Liu and Yau have proven
\[
CDE'(K,n) \quad \Longrightarrow \quad \diam_d(G) \leq 2\pi \sqrt{\frac{6 \D n}{K} }.
\]

Indeed, due to \cite[Corollary~3.3]{Muench2015} and Theorem~\ref{thm:Bonnet-Myers CD(K,n)}, this result can be improved to

\[
CDE'(K,n) \quad \Longrightarrow \quad CD(K,n) \Longrightarrow \quad     \diam_\rho(G) \leq \pi \sqrt{\frac{n}{K}}.
\]
In case of $\D < \infty$, we have
$$
\diam_d(G) \leq \sqrt{\frac{ \D}{2}} \diam_\rho(G) \leq \pi \sqrt{\frac{\D n}{ 2K}},
$$
where the first estimate is due to Lemma~\ref{lem:d, rho}.

\begin{rem}\label{rem:Horn}
	Summarizing, we can say that our approach improves \cite[Theorem 7.10]{Horn2014} by a factor of $4\sqrt 3$ and by having weaker curvature assumptions.
\end{rem}

Let us also compare our results to the results on Markov-chains in \cite{Fathi2015}.
Translated into the graph setting, we have the following result in \cite{Fathi2015}.

\begin{theorem}(see  \cite[Corollary~6.4]{Fathi2015})\label{thm:Fathi}
Let $G=(V,w,m)$ be a graph with $\D < \infty$ and $m(V) := \sum_x m(x) < \infty$.
Suppose $G$ satisfies $CD(K,\infty)$ for some $K>0$.
 Then for all $x,y \in V$,
\begin{align*}
\rho(x,y) \leq 2\sqrt 2 \frac{ \left(\sqrt{\Deg(x)} + \sqrt{\Deg(y)}\right)}{K}.
\end{align*}
\end{theorem}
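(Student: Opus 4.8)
The quickest route is to notice that the hypotheses here already fall under Theorem~\ref{thm:distance bounds CD(K,infty)}: on each connected component, that theorem assumes only $CD(K,\infty)$ and $\D<\infty$ and delivers $\rho(x,y)\le \frac{\sqrt{2\Deg(x)}+\sqrt{2\Deg(y)}}{K}=\frac{\sqrt 2\,(\sqrt{\Deg(x)}+\sqrt{\Deg(y)})}{K}$, which is a factor of $2$ sharper than the claimed estimate, the extra hypothesis $m(V)<\infty$ not even being needed. Hence the statement follows a fortiori. Since the very point of recording it is to exhibit this factor-$2$ improvement, I will instead sketch a self-contained argument in the finite-measure setting of \cite{Fathi2015} that runs parallel to Theorem~\ref{thm:distance bounds CD(K,infty)} and makes the origin of the constant transparent.

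The plan is to combine the semigroup contraction with convergence to equilibrium. First I fix $x,y\in V$ and $\eps>0$ and, by the definition of $\rho$, choose a bounded $f:V\to\IR$ with $\Gamma f\le 1$ and $f(y)-f(x)>\rho(x,y)-\eps$. As in Theorem~\ref{thm:distance bounds CD(K,infty)}, the equivalence $\Gamma P_t f\le e^{-2Kt}P_t\Gamma f$ holds, and the pointwise Cauchy-Schwarz inequality $(\Delta g)^2\le 2\Deg(z)\,\Gamma g(z)$ gives, for every $z\in V$,
\[
\left|\partial_t P_t f(z)\right|=\left|\Delta P_t f(z)\right|\le \sqrt{2\Deg(z)}\sqrt{\Gamma P_t f(z)}\le \sqrt{2\Deg(z)}\,e^{-Kt}\sqrt{P_t\Gamma f(z)}\le \sqrt{2\Deg(z)}\,e^{-Kt}.
\]
Since the right-hand side is integrable in $t$, the limit $P_\infty f(z):=\lim_{t\to\infty}P_t f(z)$ exists and integrating from $0$ to $\infty$ yields $\left|P_\infty f(z)-f(z)\right|\le \frac{\sqrt{2\Deg(z)}}{K}$.

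The finite-measure hypothesis $m(V)<\infty$ enters now: together with connectedness it forces $P_t f$ to converge, as $t\to\infty$, to the constant $\bar f:=\frac{1}{m(V)}\sum_z f(z)m(z)$, so that $P_\infty f(x)=P_\infty f(y)=\bar f$. Feeding this into the triangle inequality,
\[
\rho(x,y)-\eps< f(y)-f(x)\le \left|f(y)-\bar f\right|+\left|\bar f-f(x)\right|\le \frac{\sqrt{2\Deg(x)}+\sqrt{2\Deg(y)}}{K},
\]
and letting $\eps\to 0$ gives $\rho(x,y)\le \frac{\sqrt 2\,(\sqrt{\Deg(x)}+\sqrt{\Deg(y)})}{K}\le \frac{2\sqrt 2\,(\sqrt{\Deg(x)}+\sqrt{\Deg(y)})}{K}$, proving the claim with room to spare.

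The main obstacle is justifying the convergence to equilibrium $P_t f\to\bar f$ and the passage to $t=\infty$ in the integral. Under $CD(K,\infty)$ the associated Dirichlet form has spectral gap at least $K>0$, so on a connected finite-measure graph $P_t f\to\bar f$ in $L^2(m)$; the pointwise (indeed uniform) convergence, and $\Gamma P_t f\to 0$, then follow from the contraction estimate exactly as in the proof of Theorem~\ref{thm:distance bounds CD(K,infty)}. This is precisely the step where \cite{Fathi2015} argues instead through a transport metric on the simplex of probability measures and compares it to $\rho$; that comparison is what introduces the extra factor of $2$, whereas the semigroup argument above avoids it.
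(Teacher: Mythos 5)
Your argument is mathematically sound, but it takes a genuinely different route from the paper, and in a way that misses the point of why this theorem is stated. The paper's own ``proof'' is not an analytic derivation at all: it is a dictionary translating the graph $(V,w,m)$ into the Markov-chain setting of \cite{Fathi2015} --- defining the lazy kernel $K(x,y)=w(x,y)/(m(x)\D)$ with $K(x,x)=1-\Deg(x)/\D$, the normalized measure $\mu=m/m(V)$, identifying $L=\Delta/\D$ (so that $CD(K,\infty)$ rescales to $CD(K/\D,\infty)$), $J(x)=\Deg(x)/\D$ and $d_\Gamma=\sqrt{\D}\,\rho$ --- and then invoking \cite[Corollary~6.4]{Fathi2015} as a black box and unwinding the normalizations. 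This bookkeeping is exactly what certifies that the constant $2\sqrt2$ is genuinely Fathi--Shu's, which is the whole reason the theorem appears: Remark~\ref{rem:fathi} claims a factor-$2$ improvement \emph{over their result}. Your ``a fortiori'' deduction from Theorem~\ref{thm:distance bounds CD(K,infty)} does prove the stated inequality (modulo the fact that you must assume connectedness, which is implicit since otherwise $\rho(x,y)=\infty$ across components and the bound is false), but it makes the comparison circular --- you would be comparing the paper's theorem to a corollary of itself rather than to the result in \cite{Fathi2015}. Your self-contained sketch in the finite-measure setting is likewise essentially a replay of the proof of Theorem~\ref{thm:distance bounds CD(K,infty)}, with convergence to the equilibrium mean $\bar f$ replacing the step $|P_tf(x_0)-P_tf(y_0)|\to 0$ (and it glosses over mass conservation/stochastic completeness needed to identify the limit as $\bar f$, though that can be routed around exactly as the paper does). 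In short: what you wrote establishes the inequality, but the paper's proof establishes the attribution, and only the latter supports the improvement claim the theorem exists to set up.
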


\begin{proof}
	Indeed, the theorem is just a reformulation of \cite[Corollary~6.4]{Fathi2015}.
Let $G=(V,w,m)$ be a graph with $m(V) < \infty$ and $\D < \infty$. We define the corresponding Markov kernel as
$$K(x,y) := \begin{cases}
\frac{w(x,y)}{m(x)\D} &: x\neq y\\
1 - \frac{\Deg(x)}{\D}&: x=y
\end{cases} $$	
and the corresponding measure $\mu(x) := m(x)/m(V)$ for all $x\in V$ according to  \cite{Fathi2015}.
Then, $Lf(x) := \sum_y (f(y) - f(x)) K(x,y) = \frac {\Delta f(x)}{\D}$.

Since $\Delta$ satisfies $CD(K,\infty)$, we have that $L$ satisfies $CD(\frac K {\D} , \infty)$.
As in \cite{Fathi2015}, we set
$$
J(x) := 1- K(x,x) =  \frac{\Deg(x)}{\D}.
$$
For all $x,y\in V$, we have
\begin{align*}
d_\Gamma(x,y) := \sup\left\{f(y)-f(x): \frac 1 2 L(f^2) - fLf \leq 1\right\} = \sqrt{\D} \rho(x,y).
\end{align*}

Now, \cite[Corollary~6.4]{Fathi2015} yields for all $x,y \in V$,
\begin{align*}
 \rho(x,y) \frac{K}{\sqrt{\D}} =d_\Gamma(x,y)\frac{K}{\D} &\leq 2\sqrt 2 \left(\sqrt{J(x)} + \sqrt{J(y)} \right)  \\
&= \frac{2 \sqrt{2} \left(\sqrt{\Deg(x)} + \sqrt{\Deg(y)} \right)} {\sqrt{\D}}.
\end{align*}
Multiplying with $\frac{\sqrt{\D}}{K}$ finishes the proof.	
\end{proof}

\begin{rem}\label{rem:fathi}
We observe that Theorem~\ref{thm:distance bounds CD(K,infty)} improves Fathi's and Shu's result \cite[Corollary~6.4]{Fathi2015}, see Theorem~\ref{thm:Fathi}, by a factor of $2$	
and by the fact, that we allow $m(V) = \infty$ which corresponds to an infinite reversible invariant measure in the Markov-chain setting.	
\end{rem}

\medskip

{\bf Acknowledgement:} We gratefully acknowledge partial support by the
EPSRC Grant EP/K016687/1. FM wants to thank the German Research Foundation (DFG) for financial support.

\end{document}